\def\bege{\begin{equation}} \def\ende{\end{equation}}
\def\begr{\begin{eqnarray}} \def\endr{\end{eqnarray}}
\newcommand{\DD}{{\mathbb D}}
\def\begr{\begin{eqnarray}} \def\endr{\end{eqnarray}}
\def\msk{\medskip}
\newtheorem{Lemma}{Lemma}
\newtheorem{Theorem}{Theorem}
\begin{document}
	\title[  ]{ 2-complex symmetric composition operators on $H^2$}
	\author{ Lian Hu,  Songxiao Li$^\ast$ and Rong Yang  }

	\address{Lian Hu\\ Institute of Fundamental and Frontier Sciences, University of Electronic Science and Technology of China,
		610054, Chengdu, Sichuan, P.R. China.}
	\email{202111210323@std.uestc.edu.cn  }
	\address{Songxiao Li\\ Institute of Fundamental and Frontier Sciences, University of Electronic Science and Technology of China,
		610054, Chengdu, Sichuan, P.R. China. }
	\email{lisongxiao@uestc.edu.cn}
	\address{Rong Yang\\ Institute of Fundamental and Frontier Sciences, University of Electronic Science and Technology of China,
		610054, Chengdu, Sichuan, P.R. China.}
	\email{201921210221@std.uestc.edu.cn}
	
	\subjclass[2010]{32A36, 47B38}
	\begin{abstract}
		In this paper, we study 2-complex symmetric composition operators with the conjugation $J$ on the Hardy space $H^2$. More precisely, we obtain the necessary and sufficient condition for the composition operator $C_\phi$ to be 2-complex symmetric when the symbols $\phi$ is an automorphism of $\DD$. We also characterize the 2-complex symmetric  composition operator $C_\phi$  on the Hardy space $H^2$ when $\phi$ is a linear fractional self-map of $\DD$.
			\thanks{$\ast$ Corresponding author.}
		\vskip 3mm \noindent{\it Keywords}: Composition operator;  $m$-complex symmetric;  normal \end{abstract}
\maketitle

\section{Introduction}
  Let $\DD$ and $\partial\DD$ be the open unit disk $\mathbb{C}$ and the unit circle in the complex plane, respectively. Let $H(\DD)$ be the set of all analytic functions on $\DD$.  The   Hardy  space $H^2(\DD)$ is the space of  all  $f\in H(\DD)$ such that $$
\|f\|^2=\sup_{0<r<1}{1\over 2\pi}\int_0^{2\pi}|f(re^{i\theta})|^2d\theta<\infty.$$
The space $H^2(\DD)$ is a reproducing kernel Hilbert space, that is, for each $w\in \DD$ and $f\in H^2(\DD)$, there is a unique function $K_w\in H^2(\DD)$ such that $$
\langle f,K_w\rangle=f(w),$$where $K_w={1\over 1-\bar{w}z}$ is said to be the reproducing kernel at $w$. For $g\in L^\infty(\partial\DD)$, the Toeplitz operator $T_g$ is defined as $T_gf=P(fg)$ for $f\in H^2(\DD)$, where $P$ is the orthogonal projection of $L^2$ onto $H^2(\DD)$. Recall that $$T_g^\ast K_w=\overline{f(w)}K_w$$ for each $w\in \DD$ and $f\in H^\infty$.

Let $\phi$ be an analytic self-map of $\DD$. Recall that  the composition operator $C_\phi$ is defined by
  $$C_\phi f(z)=f(\phi(z)), ~~~~~~~f\in H(\DD),~~~ z\in \DD.$$
   It is easy to see that  $C_\phi^\ast K_w=K_{\phi(w)}$ foe each $w\in \DD$.

Let $H$ and $\mathcal{B}(H)$ stand for a separable complex Hilbert space and the set of all continuous linear operators on $H$, respectively. An operator $C:H\to H$ is said to be a conjugation on $H$ if it is
	\begin{enumerate}
	\item[(a)] anti-linear or conjugate-linear: $C(\alpha +\beta y)=\bar{\alpha}C(x)+\bar{\beta}C(y)$, for all $\alpha,\beta\in \mathbb{C}$ and $x,y\in H$,
	\item[(b)] isometric: $\|Cx\|=\|x\|$, for all $x\in H$,
	\item[(c)] involutive: $C^2=I_d$, where $I_d$ is an identity operator.
	\end{enumerate}
It is easy to check that $(Jf)(z)=\overline{f(\bar{z})}$ is a conjugation on the space $H^2$.

An operator $T\in \mathcal{B}(H)$ is said to be complex symmetric if $ T=CT^\ast C$, where $C$ is a conjugation on $H$. We also say that $T$ is complex symmetric with $C$ or $C$-symmetric operator. The class of complex symmetric operators includes all normal operators, binormal operators, Hankel operators, compressed Toeplitz operators and the Volterra integration operator. The general study of complex symmetric operators was initiated  by Garcia, Putinar and Wogen in \cite{gp,gp2,gw,gw2}. See \cite{hw,lk,nst,gm,f,ns, tmh, y,gz}
for more results on complex symmetric operators.

 Let $m$ be a positive integer and $T\in \mathcal{B}(H)$. In \cite{ckl}, the authors  defined $m$-complex symmetric operators as follows:   $T $ is said to be an $m$-complex symmetric operator if there exists some conjugation $C$ such that
$$
\sum_{j=0}^{m}(-1)^{m-j}\left(\begin{array}{c}
m \\
j
\end{array}\right) T^{* j} C T^{m-j}C  =0.
$$
In this case, $T$ is said to be $m$-complex symmetric with $C$.  When   $m=2$,  we get
  $$CT^2C-2T^\ast CTC+{T^\ast}^2 =0, $$
which is equivalent to
$$ CT^2-2T^\ast CT+{T^\ast}^2C =0. $$ It is clear that $1$-complex symmetric operator is just the complex symmetric operator. By a simple calculation, we  see that all complex symmetric operators are 2-complex symmetric  operators. In \cite{ejjl}, the authors    studied $m$-complex symmetric  weighted shifts on $\mathbb{C}^{n}$.   We refer the reader to \cite{ckl, ckl2, ejjl, jr} for more details about $m$-complex symmetric operators.

In this paper, we study 2-complex symmetric composition operators with  $J$ on the Hardy  space $H^2$. When the symbols $\phi$ is an automorphism of $\DD$, we see that  the composition operator $C_\phi$ is 2-complex symmetric with  $J$ if and only if $C_\phi$ is  normal.   Furthermore, we also characterize the 2-complex symmetric  composition operators on $H^2$ when the induced map is a linear fractional self-map of $\DD$.\msk

\section{Main results}
  We begin this section with Cowen's formula for the adjoint of a linear fractional self-map. For a linear fractional self-map $\phi(z)={az+b\over cz+d}$ of $\DD$, where $a,b,c,d\in \mathbb{C}$ and $ad-bc\ne 0$, Cowen in \cite{c} obtained the following important formula $$C_\phi^\ast =T_gC_\sigma T_h^\ast,$$
   where
    $$
  \sigma(z)={\bar{a}z-\bar{c}  \over -\bar{b}z+\bar{d} },~~g(z)={1\over-\bar{b}z+\bar{d} },~~ h(z)=cz+d.$$

\begin{Lemma}\label{lem1}\cite{cm}
	Let $\phi$ be an analytic self-map of  $\DD$. Then $C_\phi$ is normal if and only if $\phi(z)=az$ with $|a|\le 1$.
\end{Lemma}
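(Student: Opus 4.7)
The result is an equivalence; I would prove the two directions separately, with sufficiency a short computation and necessity the substantive direction.

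For sufficiency, I would apply Cowen's adjoint formula displayed just above the lemma with $b=c=0$, $d=1$, which gives $\sigma(z)=\bar a z$ and $g\equiv h\equiv 1$, so $C_\phi^\ast=C_{\bar a z}$. A direct composition then yields $C_\phi C_\phi^\ast = C_\phi^\ast C_\phi = C_{|a|^2 z}$, establishing normality.

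For necessity, my plan is to extract structural information from normality by testing against two particular vectors of $H^2$: the constant function $\mathbf 1 = K_0$ and the monomial $z$. In the first step I would show $\phi(0)=0$. Since $C_\phi \mathbf 1 = \mathbf 1$, the constant function is an eigenvector of $C_\phi$ with eigenvalue $1$, and for normal operators one has $\ker(T-\lambda)=\ker(T^\ast-\bar\lambda)$, so $C_\phi^\ast \mathbf 1 = \mathbf 1$. Combining this with the identity $C_\phi^\ast K_0 = K_{\phi(0)}$ from the introduction forces $K_{\phi(0)}=K_0$, hence $\phi(0)=0$. In the second step I would write $\phi(z)=\sum_{n\ge 1} a_n z^n$ and compute $C_\phi^\ast z$ by expanding in the orthonormal basis $\{z^n\}_{n\ge 0}$: the coefficients $\langle C_\phi^\ast z, z^n\rangle = \langle z, \phi^n\rangle$ are the conjugates of the $z$-coefficients of $\phi^n$, and because $\phi^n$ has a zero of order at least $n$ at the origin, only the $n=1$ term survives. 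This yields $C_\phi^\ast z = \overline{a_1}\, z$. Normality then promotes $z$ to an eigenvector of $C_\phi$ with eigenvalue $a_1$, and matching against $C_\phi z = \phi(z)$ gives $\phi(z) = a_1 z$; the Schwarz lemma supplies $|a_1|\le 1$.

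I expect the main obstacle to be the identification of the right test vector in the second step of the necessity argument. Once $\phi(0)=0$ is in hand, the monomial $z$ is the natural candidate, and the short Taylor-coefficient computation for $C_\phi^\ast z$ is the crux; without it the argument has no algebraic entry point. The remainder is formal, being driven entirely by the $\ker(T-\lambda)=\ker(T^\ast-\bar\lambda)$ principle for normal operators together with the reproducing property $C_\phi^\ast K_w = K_{\phi(w)}$.
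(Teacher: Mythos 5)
Your argument is correct and complete; note that the paper itself gives no proof of this lemma, citing it directly from Cowen--MacCluer \cite{cm}, and your two-step necessity argument (first $\phi(0)=0$ via $C_\phi \mathbf{1}=\mathbf{1}$ and $\ker(T-\lambda)=\ker(T^\ast-\bar\lambda)$, then $C_\phi^\ast z=\overline{\phi'(0)}\,z$ via the Taylor-coefficient computation) is essentially the classical proof found there. The sufficiency direction is also fine, though it can be had even more cheaply by observing that $C_{az}z^n=a^nz^n$ makes $C_\phi$ diagonal in the orthonormal basis $\{z^n\}$, hence normal.
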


\begin{Lemma}\label{lem2}
	Let $\phi(z)={ az+b \over cz+d }$ be a linear fractional self-map of $\DD$. If $w\in\DD$ satisfies $\bar{a}w\ne\bar{c}$, then the following statements hold:
	\begin{enumerate}
		\item[(a)]  $JC_\phi^2K_w(z)=K_{\bar{w}}(\overline{\phi_2(\bar{z})})$,
		\item[(b)]
		$C_\phi^\ast JC_\phi K_{w}(z)=-\bar{c}{ g(w) \over \sigma(w) }K_{\phi(0)}(z)+\overline{h( {1  \over \overline{\sigma(w)} } )}g(w)K_{\phi( \overline{ \sigma(w) })}(z),$
		\item[(c)]${C_\phi^\ast}^2JK_w(z)=K_{\phi_2(\bar{w})}(z),$
		\item[(d)] $ JC_\phi JC_\phi^\ast J K_{w}(z)=K_{\phi(\bar{w})}(\overline{\phi(\bar{z})})$
	\end{enumerate}
where $\sigma(z)={ \bar{a}z-\bar{c} \over -\bar{b}z+\bar{d} }$, $g(z)={1\over -\bar{b}z+\bar{d}}$, and $h(z)=cz+d$.
\end{Lemma}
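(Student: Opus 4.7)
The plan is to establish the four identities by reducing everything to manipulations of reproducing kernels on $H^2$. Throughout I will rely on three basic facts: $JK_w=K_{\bar w}$, which follows directly from $(Jf)(z)=\overline{f(\bar z)}$ together with $\overline{K_u(\bar z)}=K_{\bar u}(z)$; the adjoint action $C_\phi^\ast K_u=K_{\phi(u)}$, in particular $C_\phi^\ast \mathbf{1}=K_{\phi(0)}$; and Cowen's factorization $C_\phi^\ast=T_g C_\sigma T_h^\ast$ together with its adjoint form $C_\phi=T_h C_\sigma^\ast T_g^\ast$, which is available because $\phi$ is a linear fractional self-map of $\DD$.

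Parts (a), (c), and (d) then reduce to routine bookkeeping. For (a), I would observe $C_\phi^2K_w(z)=K_w(\phi_2(z))$ and then apply $(Jf)(z)=\overline{f(\bar z)}$ together with $\overline{K_w(u)}=K_{\bar w}(\bar u)$. For (c), unravel right-to-left: $JK_w=K_{\bar w}$, then $(C_\phi^\ast)^2 K_{\bar w}=K_{\phi(\phi(\bar w))}=K_{\phi_2(\bar w)}$. For (d), again from right to left: $JK_w=K_{\bar w}$, $C_\phi^\ast K_{\bar w}=K_{\phi(\bar w)}$, another $J$ gives $K_{\overline{\phi(\bar w)}}$, $C_\phi$ composes with $\phi$, and a final $J$ rearranges via $(Jf)(z)=\overline{f(\bar z)}$ into $K_{\phi(\bar w)}(\overline{\phi(\bar z)})$.

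The substantive work is in (b). Using $C_\phi=T_h C_\sigma^\ast T_g^\ast$ together with $T_g^\ast K_w=\overline{g(w)}K_w$, $C_\sigma^\ast K_w=K_{\sigma(w)}$, and $T_h K_{\sigma(w)}=(cz+d)K_{\sigma(w)}$, I obtain $C_\phi K_w(z)=\overline{g(w)}(cz+d)K_{\sigma(w)}(z)$. Applying $J$ and conjugating yields
\[
JC_\phi K_w(z)=g(w)(\bar c z+\bar d)\,K_{\overline{\sigma(w)}}(z).
\]
The hypothesis $\bar a w\neq \bar c$ is precisely the statement $\sigma(w)\neq 0$, which makes the partial-fraction identity
\[
\frac{\bar c z+\bar d}{1-\overline{\sigma(w)}z}=-\frac{\bar c}{\sigma(w)}+\overline{h\bigl(1/\overline{\sigma(w)}\bigr)}\,K_{\overline{\sigma(w)}}(z)
\]
legitimate, once one checks the algebraic identification $\bar d+\bar c/\sigma(w)=\overline{h(1/\overline{\sigma(w)})}$. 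With $JC_\phi K_w$ written as a linear combination of $\mathbf 1$ and one reproducing kernel, $C_\phi^\ast$ may be applied term-by-term using $C_\phi^\ast \mathbf 1=K_{\phi(0)}$ and $C_\phi^\ast K_{\overline{\sigma(w)}}=K_{\phi(\overline{\sigma(w)})}$ to produce the stated expression.

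The main obstacle is the decomposition step in (b): one must first recast the function $g(w)(\bar c z+\bar d)K_{\overline{\sigma(w)}}$, which is \emph{not} itself a kernel, as a superposition of $\mathbf 1$ and a single kernel, so that $C_\phi^\ast$ can be moved past the Toeplitz/composition factors without invoking the full Cowen machinery again. This is exactly where the hypothesis $\bar a w\neq \bar c$ enters, and it is also where the factor $\overline{h(1/\overline{\sigma(w)})}$ in the statement originates. Once this decomposition is in hand, the remainder of (b)—and the whole of (a), (c), (d)—is a direct kernel computation.
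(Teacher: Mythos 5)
Your proof is correct and follows essentially the same route as the paper: parts (a), (c), (d) are the same direct kernel computations, and for (b) you arrive at the same rational function $JC_\phi K_w(z)=\frac{\bar c z+\bar d}{(\bar c-\bar a w)z+(\bar d-\bar b w)}$ (via Cowen's factorization of $C_\phi$ rather than by substituting $\phi$ directly, a cosmetic difference) and then perform the identical partial-fraction split into a constant plus a single kernel before applying $C_\phi^\ast$ term by term. One notational slip worth fixing: with the convention $K_u(z)=\frac{1}{1-\bar u z}$ one has $K_{\overline{\sigma(w)}}(z)=\frac{1}{1-\sigma(w)z}$, so the left-hand side of your displayed partial-fraction identity should read $\frac{\bar c z+\bar d}{1-\sigma(w)z}$ rather than $\frac{\bar c z+\bar d}{1-\overline{\sigma(w)}z}$; with that correction the coefficients $-\bar c/\sigma(w)$ and $\overline{h(1/\overline{\sigma(w)})}$ come out exactly as stated.
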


\begin{proof}
	Since $Jf(z)=\overline{f(\bar{z})}$ and $f(w)=\langle f,K_w\rangle$ for any $w\in \DD$ and $f\in H^2$, then $(a)$ and $(c)$ hold obviously. Now we only verify $(b)$ and $(d)$. For any $w\in\DD$ with $\bar{a}w\ne\bar{c}$, we obtain
	\begin{align*}
		C_\phi^\ast JC_\phi K_{w}(z)=&C_\phi^\ast K_{\bar{w}}(\overline{\phi(\bar{z})})\\
=&C_\phi^\ast{ \bar{c}z+\bar{d} \over (\bar{c}-\bar{a}w)z+(\bar{d}-\bar{b}w) }\\
	=&	C_\phi^\ast\left(  { \bar{c} \over \bar{c}-\bar{a}w}+\left( \bar{d}-{ \bar{c}(\bar{d}-\bar{b}w) \over \bar{c}-\bar{a}w } \right){1\over \bar{d}-\bar{b}w} {1\over  1-{ \bar{a}w-\bar{c} \over \bar{d}-\bar{b}w }z} \right)\\
	=& { \bar{c} \over \bar{c}-\bar{a}w}C_\phi^\ast K_0(z)+\left( \bar{d}-{ \bar{c}(\bar{d}-\bar{b}w) \over \bar{c}-\bar{a}w } \right){1\over \bar{d}-\bar{b}w}	C_\phi^\ast K_{\overline{\sigma(w)}}(z)\\
		=&-\bar{c}{ g(w) \over \sigma(w) }K_{\phi(0)}(z)+\overline{h( {1  \over \overline{\sigma(w)} } )}g(w)K_{\phi( \overline{ \sigma(w) })}(z).
	\end{align*}
Clearly,
 \begin{align*}
	JC_\phi JC_\phi^\ast J K_{w}(z)&=JC_\phi JC_\phi^\ast K_{\bar{w}}(z)=JC_\phi JK_{\phi(\bar{w})}(z)\\
	&=		JC_\phi K_{\overline{\phi(\bar{w})}}(z)=JK_{\overline{\phi(\bar{w})}}(\phi(z))\\&=K_{\phi(\bar{w})}(\overline{\phi(\bar{z})}).
\end{align*}
The proof is complete.
\end{proof}

\begin{Lemma}\label{lem3}
Let $\phi$ be an analytic self-map of $\DD$. If $C_\phi$ is normal, then  $C_\phi^\ast $ and $C_\phi$ is 2-complex symmetric with   $J$.
\end{Lemma}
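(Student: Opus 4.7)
The plan is to prove something stronger than stated, namely that $C_\phi$ is already $1$-complex symmetric with $J$ (i.e.\ $JC_\phi J = C_\phi^*$), and then to deduce the $2$-complex symmetry from a short algebraic manipulation based on the intertwining relation $JC_\phi = C_\phi^* J$.

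First I would invoke Lemma \ref{lem1} to reduce to the case $\phi(z) = az$ for some $|a|\le 1$. Working on reproducing kernels, one checks directly from $K_w(z) = 1/(1-\bar w z)$ and $(Jf)(z) = \overline{f(\bar z)}$ that $JK_w = K_{\bar w}$. Combining this with $C_\phi^\ast K_w = K_{\phi(w)} = K_{aw}$, which was recorded in the introduction, gives
$$
JC_\phi^\ast J K_w = J K_{a\bar w} = K_{\bar a w}.
$$
On the other hand a direct evaluation shows $C_\phi K_w(z) = K_w(az) = 1/(1 - a\bar w z) = K_{\bar a w}(z)$. Since the reproducing kernels have dense linear span in $H^2$, this proves $C_\phi = JC_\phi^\ast J$, i.e.\ $JC_\phi = C_\phi^\ast J$.

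The remaining step is purely algebraic. Using $JC_\phi = C_\phi^\ast J$ twice,
$$
JC_\phi^2 = (JC_\phi)C_\phi = C_\phi^\ast JC_\phi, \qquad (C_\phi^\ast)^2 J = C_\phi^\ast (C_\phi^\ast J) = C_\phi^\ast JC_\phi,
$$
so $JC_\phi^2 - 2C_\phi^\ast JC_\phi + (C_\phi^\ast)^2 J = 0$, which is exactly the $2$-complex symmetry of $C_\phi$ with $J$ in the form stated in the introduction. For $C_\phi^\ast$, one first multiplies $C_\phi^\ast = JC_\phi J$ on the left by $J$ to obtain $JC_\phi^\ast = C_\phi J$, and then the identical cancellation shows that $C_\phi^\ast$ is $2$-complex symmetric with $J$.

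I do not anticipate any real obstacle: the argument is essentially an identity check on reproducing kernels together with a three-line algebraic manipulation. The only mild point to watch is the equivalence between the two forms of the $2$-complex symmetry identity in the introduction, which is trivially handled by $J^2 = I$, and the density of $\operatorname{span}\{K_w : w\in \DD\}$ needed to pass from kernel identities to operator identities on all of $H^2$.
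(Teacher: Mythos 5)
Your proof is correct, but it follows a genuinely different (and more self-contained) route than the paper's. For the claim about $C_\phi$ itself, the paper cites Corollary~3.10 of Jung--Kim--Ko--Lee to conclude that a normal composition operator is complex symmetric with $J$, and then invokes the general fact from the introduction that complex symmetric implies $2$-complex symmetric; you instead verify $JC_\phi J=C_\phi^\ast$ by hand on reproducing kernels, which avoids the external citation. The more substantive difference is the treatment of $C_\phi^\ast$: the paper computes the three terms ${C_\phi^\ast}^2JK_w$, $JC_\phi JC_\phi^\ast JK_w$ and $JC_\phi^2K_w$ explicitly via Lemma~\ref{lem2} (with $\sigma(z)=\bar a z$ and $g=h=1$) and checks that each equals $(1-\bar a^2wz)^{-1}$, whereas you derive both defining identities purely algebraically from the single intertwining relation $JC_\phi=C_\phi^\ast J$. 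Your cancellation is shorter and makes transparent the general principle that a $J$-symmetric operator and its adjoint are automatically $2$-complex symmetric with $J$; the paper's explicit computation, on the other hand, doubles as a worked instance of the Lemma~\ref{lem2} formulas that are reused throughout Section~3. One point worth stating explicitly in your write-up: since $J$ is antilinear, $JC_\phi^\ast J$ is linear, so agreement with $C_\phi$ on the kernels $K_w$ does extend by linearity and continuity to all of $H^2$ --- your appeal to the density of $\operatorname{span}\{K_w: w\in\DD\}$ is exactly the right justification and closes the only potential gap.
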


\begin{proof}
	Assume that $C_\phi$ is normal. From Corollary 3.10 in  \cite{jkkl} we see that  $C_\phi$ is  complex symmetric with $J$.  Hence, $C_\phi$ is  2-complex symmetric with  $J$.	
	Write $\phi(z)=az$ with $|a|\le 1$ from Lemma \ref{lem1}. We notice that $T^*$ is   $2$-complex symmetric with $J$ if and only if
$$ {T^\ast}^2J-2JT JT^\ast J+ JT^2 =0. $$
  In view of Lemma \ref{lem2}, set $\sigma(z)=\bar{a}z$, $ g(z)=h(z)=1$, we have
	$$
	{C_\phi^\ast}^2JK_w(z)=K_{\phi_2(\bar{w})}(z)={1\over 1-\bar{a}^2wz},$$

	\begin{align*}
		JC_\phi JC_\phi^\ast J K_{w}(z)&=K_{\phi(\bar{w})}(\overline{\phi(\bar{z})})={1\over 1-\bar{a}^2wz}
	\end{align*}
and
$$
JC_\phi^2K_w(z)=K_{\bar{w}}(\overline{\phi_2(\bar{z})})={1\over 1-w\overline{\phi_2(\bar{z})}}={1\over 1-\bar{a}^2wz}.$$
	Thus, it is obvious that ${C^\ast_\phi}^2 J-2JC_\phi J C^\ast_\phi J + JC^2_\phi=0$, which means that $C_\phi^\ast$ is 2-complex symmetric with   $J$.
\end{proof}

\begin{Lemma}\label{lem4}
	Let $\phi$ be an analytic self-map of $\DD$. If $C_\phi$ is 2-complex symmetric with  $J$, then $$2\phi_2(0)-4\phi(0)+\phi(0)\phi_2(0)=0.$$
\end{Lemma}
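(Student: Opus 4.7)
The plan is to substitute the constant function $K_0 = 1$ into the operator identity
$$
JC_\phi^2 - 2\,C_\phi^\ast J C_\phi + {C_\phi^\ast}^2 J = 0
$$
that characterises $2$-complex symmetry of $C_\phi$ with $J$, and then extract a scalar relation from the resulting reproducing-kernel identity.

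First I would reduce each of the three operator terms applied to $K_0$. Using only $C_\phi K_0 = K_0$, $J K_0 = K_0$, and the standard adjoint action $C_\phi^\ast K_w = K_{\phi(w)}$, the three summands become $K_0$, $K_{\phi(0)}$, and $K_{\phi_2(0)}$ respectively, so the 2-complex symmetric condition collapses to the pointwise identity
$$
1 - 2K_{\phi(0)}(z) + K_{\phi_2(0)}(z) = 0, \qquad z \in \DD.
$$

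Next I would substitute $K_w(z) = (1-\bar w z)^{-1}$ and clear the common denominator $(1-\overline{\phi(0)}z)(1-\overline{\phi_2(0)}z)$. A direct expansion reduces the identity to
$$
\bigl(\overline{\phi_2(0)} - 2\overline{\phi(0)}\bigr)\,z + \overline{\phi(0)\phi_2(0)}\,z^2 = 0.
$$
Since this polynomial must vanish identically on $\DD$, both coefficients must be zero, giving $\phi_2(0) - 2\phi(0) = 0$ and $\phi(0)\phi_2(0) = 0$. The required identity $2\phi_2(0) - 4\phi(0) + \phi(0)\phi_2(0) = 0$ is then just $2\bigl(\phi_2(0) - 2\phi(0)\bigr) + \phi(0)\phi_2(0)$, and so follows at once.

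The argument is almost entirely reproducing-kernel bookkeeping and presents no serious obstacle; the only mildly delicate choice is the test vector. Testing on $K_0$ makes the calculation painless because both $C_\phi$ and $J$ fix the constant function, so the three operator products immediately return explicit kernel functions, avoiding the Cowen adjoint machinery used in Lemma \ref{lem2}.
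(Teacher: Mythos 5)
Your proof is correct, and it starts from the same test vector as the paper: both arguments apply the defining identity $JC_\phi^2-2C_\phi^\ast JC_\phi+{C_\phi^\ast}^2J=0$ to $K_0$ and use $C_\phi K_0=K_0$, $JK_0=K_0$, $C_\phi^\ast K_w=K_{\phi(w)}$ to reduce the three terms to $K_0$, $K_{\phi(0)}$, $K_{\phi_2(0)}$. Where you diverge is in how the scalar relation is extracted: the paper pairs the resulting function identity against the single functional $K_{1/2}$, i.e.\ evaluates $1-2K_{\phi(0)}(z)+K_{\phi_2(0)}(z)=0$ only at $z=\tfrac12$, which after clearing denominators yields exactly the displayed relation $2\phi_2(0)-4\phi(0)+\phi(0)\phi_2(0)=0$ and nothing more. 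You instead use the identity for all $z\in\DD$ and compare coefficients of the polynomial $\bigl(\overline{\phi_2(0)}-2\overline{\phi(0)}\bigr)z+\overline{\phi(0)\phi_2(0)}\,z^2$, which forces both coefficients to vanish. Your computation is right, and it in fact proves strictly more than the lemma claims: combining $\phi_2(0)=2\phi(0)$ with $\phi(0)\phi_2(0)=0$ gives $2\phi(0)^2=0$, hence $\phi(0)=\phi_2(0)=0$. The stated relation is then a trivial linear combination, as you note. So your route buys a genuinely stronger conclusion (any $C_\phi$ that is $2$-complex symmetric with $J$ must fix the origin), which would, for instance, make the paper's final two theorems immediate; the paper's single-point pairing is marginally less computation but discards this extra information.
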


\begin{proof}
	Since $f(w)=\langle f,K_w\rangle$ for any $w\in \DD$ and $f\in H^2$, we have
  $$\langle JC_\phi^2K_0,K_{1\over 2} \rangle=\langle JK_0,K_{1\over 2}\rangle=1,$$ $$\langle
	C_\phi^\ast JC_\phi K_0,K_{1\over 2}\rangle=\langle C_\phi^\ast K_0, K_{1\over 2}\rangle=\langle K_{\phi(0)},K_{1\over 2}\rangle={2\over 2-\overline{\phi(0)}}$$and$$
	\langle {C_\phi^\ast}^2JK_0, K_{1\over 2}\rangle= \langle {C_\phi^\ast}^2K_0,  K_{1\over 2}\rangle=\langle K_{\phi_2(0)},K_{1\over 2} \rangle={2\over 2-\overline{\phi_2(0)}}.$$
By the assumption that $C_\phi$ is 2-complex symmetric with   $J$, we get  that  $$1-{4\over 2-\overline{\phi(0)}}+{2\over 2-\overline{\phi_2(0)}}=0.$$  By a simple calculation, we get the desired result.
\end{proof}

\begin{Theorem}
	Let $\phi$ be an automorphism of $\DD$. Then $C_\phi$ is 2-complex symmetric with   $J$ if and only if $C_\phi$ is normal.
\end{Theorem}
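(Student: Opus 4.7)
The forward direction follows immediately from Lemma \ref{lem3}, so the content lies in the converse. Assuming $C_\phi$ is 2-complex symmetric with $J$, my plan is to extract as much as possible from the operator identity $JC_\phi^2 - 2C_\phi^\ast JC_\phi + {C_\phi^\ast}^2 J = 0$ by feeding it the reproducing kernel $K_0 \equiv 1$, which is fixed by both $J$ and $C_\phi$.

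Because $JK_0 = K_0$ and $C_\phi K_0 = K_0$, the three terms simplify to $JC_\phi^2 K_0 = K_0$, $C_\phi^\ast JC_\phi K_0 = C_\phi^\ast K_0 = K_{\phi(0)}$, and ${C_\phi^\ast}^2 JK_0 = K_{\phi_2(0)}$. The 2-complex symmetric identity therefore collapses to the vector identity $K_0 - 2K_{\phi(0)} + K_{\phi_2(0)} = 0$ in $H^2$. This is strictly stronger than the scalar conclusion of Lemma \ref{lem4}, which records only the inner product with $K_{1/2}$; here I will exploit it pointwise for every $z \in \DD$.

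Setting $u = \overline{\phi(0)}$ and $v = \overline{\phi_2(0)}$, clearing denominators in $1 - \frac{2}{1-uz} + \frac{1}{1-vz} = 0$ and collecting powers of $z$, I expect the identity to reduce to $z(uvz + v - 2u) = 0$ for all $z\in\DD$. Matching coefficients of $z$ and $z^2$ yields $uv = 0$ and $v = 2u$, forcing $u = v = 0$, i.e., $\phi(0) = 0$. For an automorphism $\phi(z) = \lambda(z-p)/(1-\bar pz)$ with $|\lambda|=1$ and $|p|<1$, the condition $\phi(0) = -\lambda p = 0$ gives $p=0$, so $\phi(z)=\lambda z$, and Lemma \ref{lem1} finishes the argument by asserting that $C_\phi$ is normal.

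The main subtlety is recognizing that Lemma \ref{lem4} by itself is \emph{insufficient}: its single scalar equation admits non-rotation automorphism solutions (one can check, for instance, that $\phi(z)=(2z+1)/(2+z)$ has $\phi(0)=1/2$, $\phi_2(0)=4/5$ and satisfies $2\phi_2(0)-4\phi(0)+\phi(0)\phi_2(0)=0$ without being a rotation). Consequently, one must keep the full vector identity $K_0 - 2K_{\phi(0)} + K_{\phi_2(0)} = 0$ rather than any single scalar projection of it; once that is in place, the remaining computation is a routine rational-function identity and the link to normality is provided by Lemma \ref{lem1}.
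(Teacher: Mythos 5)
Your proof is correct, and at its core it follows the same strategy as the paper's: both arguments apply the identity $JC_\phi^2-2C_\phi^\ast JC_\phi+{C_\phi^\ast}^2J=0$ to $K_0$ and compare coefficients of the resulting rational identity in $z$ (the paper does this by setting $w=0$ in the kernel formulas of Lemma \ref{lem2}). Your execution, however, is genuinely leaner and slightly more general. Since $K_0\equiv 1$ is fixed by both $J$ and $C_\phi$, you obtain the vector identity $K_0-2K_{\phi(0)}+K_{\phi_2(0)}=0$ without invoking Cowen's adjoint formula at all, and the coefficient comparison $\overline{\phi(0)}\,\overline{\phi_2(0)}=0$ and $\overline{\phi_2(0)}=2\overline{\phi(0)}$ (both computations check out) forces $\phi(0)=0$ for \emph{any} analytic self-map inducing a $J$-2-complex-symmetric $C_\phi$ --- a stronger intermediate fact than the paper extracts, which would also shortcut some of the later theorems in Section 3. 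The paper instead parametrizes $\phi(z)=\lambda(a-z)/(1-\bar a z)$ first, deduces $\lambda=1$ from the $z^2$ coefficient, and only then concludes $a=0$; both routes end at Lemma \ref{lem1}. Your side remark is also accurate: $\phi(z)=(2z+1)/(2+z)$ is an automorphism with $\phi(0)=1/2$, $\phi_2(0)=4/5$, and $2\cdot\tfrac45-4\cdot\tfrac12+\tfrac12\cdot\tfrac45=0$, so the single scalar consequence recorded in Lemma \ref{lem4} really is insufficient, and retaining the full identity in $z$ is essential.
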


\begin{proof} Assume that $C_\phi$ is normal. From Lemma 3 we see that   $C_\phi$ is  2-complex symmetric with   $J$.

	Now suppose that $\phi(z)=\lambda{ a-z \over 1-\bar{a}z }$ with $|\lambda|=1$, $a\in \DD$ and $C_\phi$ is 2-complex symmetric with   $J$.
Let $\sigma(z)={ -\bar{\lambda}z+a\over -\overline{\lambda a}z+1}$, $g(z)={1\over -\overline{\lambda a}z+1}$ and $h(z)=-\bar{a}z+1$. Noting that $$\phi_2(z)={\lambda(\lambda z-\lambda a)-\lambda a(\bar{a}z-1)  \over \bar{a}(\lambda z-\lambda a)- (\bar{a}z-1)},$$
  then for $\bar{\lambda}w\ne a$, Lemma \ref{lem2} gives that
	\begin{align}\label{equ1}
		JC_\phi^2K_w(z)=K_{\bar{w}}(\overline{\phi_2(\bar{z})})={ a(\bar{\lambda}z-\overline{\lambda a})-(az-1) \over a(\bar{\lambda}z-\overline{\lambda a})-(az-1)-w[\bar{\lambda}(\bar{\lambda}z-\overline{\lambda a})-\overline{\lambda a}(az-1)] },
		\end{align}
  \begin{equation}\label{equ2}
  	\begin{aligned}
  	C_\phi^\ast JC_\phi K_{w}(z)=&a{ g(w) \over \sigma(w) }K_{\phi(0)}(z)+\overline{h( {1  \over \overline{\sigma(w)} } )}g(w)K_{\phi( \overline{ \sigma(w) })}(z)\\
  	=& {a\over (-\bar{\lambda} w+a)(1-\overline{\lambda a}z)}+{\bar{\lambda }w(|a|^2-1) \over (-\bar{\lambda }w+a)(-\overline{\lambda a}w+1) }\\
  	&\cdot {-\overline{\lambda a }w+1+a\bar{\lambda }w-{a}^2  \over -\overline{\lambda a }w+1+a\bar{\lambda }w-{a}^2 -(\bar{\lambda}^2{w}-\bar{\lambda}^2\bar{a}^2{w}-\bar{\lambda}a+\overline{\lambda a})z}
  \end{aligned}
  \end{equation}
and
\begin{align}\label{equ3}
 {C_\phi^\ast}^2JK_w(z)=K_{\phi_2(\bar{w})}(z)={ a(\bar{\lambda}w-\overline{\lambda a})-(aw-1) \over a(\bar{\lambda}w-\overline{\lambda a})-(aw-1)-[\bar{\lambda}(\bar{\lambda}w-\overline{\lambda a})-\overline{\lambda a}(aw-1)]z }
 \end{align}
for any $w,z\in \DD$.
 Taking $w=0$ in (\ref{equ1}), (\ref{equ2}) and (\ref{equ3}), we have
 $$
 JC_\phi^2K_0(z)=1,\,\,\,\,\,\, C_\phi^\ast JC_\phi K_{0}(z)={ 1\over 1-\overline{\lambda a}z},$$and$$
  {C_\phi^\ast}^2JK_0(z)={-\bar{\lambda}|a|^2+1  \over  -\bar{\lambda}|a|^2+1+(\bar{\lambda}^2\bar{a}-\overline{\lambda a})z }$$for any $z\in \DD$.
 Since  $C_\phi$ is 2-complex symmetric with   $J$, we get
  \begin{align}\label{equ5}
  { 1+\overline{\lambda a}z \over 1-\overline{\lambda a}z }={1-\bar{\lambda}|a|^2 \over  -\bar{\lambda}|a|^2+1+(\bar{\lambda}^2\bar{a}-\overline{\lambda a})z}.
  \end{align}
Calculating and noting that the coefficients of $z^2$ must be $0$, we obtain that $\lambda^2 a^2(\lambda-1)=0$. Since $\lambda$ and $a$ are nonzero, then $\lambda=1$. Hence, (\ref{equ5}) becomes $$
{ 1+\bar{a}z\over 1-\bar{a}z }=1,$$
 which implies that $\bar{a}z=0$ for all $z\in \DD$. Thus $a=0$. Hence
  $\phi(z)=-\lambda z$ with $|\lambda|=1$. Lemma \ref{lem1}   gives that $C_\phi$ is normal.  The proof is complete.
\end{proof}

\section{Linear fractional self-maps}
In this section, we first consider 2-complex symmetric composition operators with  $J$ which are induced by linear fractional self-maps with $\phi(0)=0$.

\begin{Theorem}\label{the2}
	Let $\phi(z)={az+b\over cz+d }$ be a linear fractional of $\DD$ and $\phi(0)=0$. Then $C_\phi$ ($C_\phi^\ast $) is 2-complex symmetric with  $J$ if and only if $C_\phi$ is normal.
\end{Theorem}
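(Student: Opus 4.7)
The forward direction is Lemma~\ref{lem3}. For the converse, $\phi(0) = 0$ combined with $\phi(z) = (az+b)/(cz+d)$ forces $b = 0$ and $d \ne 0$, so I normalize $d = 1$ and write $\phi(z) = az/(cz + 1)$; non-degeneracy gives $a \ne 0$, and $\phi$ being a self-map of $\DD$ forces $|c| < 1$ (otherwise the pole $-1/c$ lies in $\overline{\DD}$). The goal is to show $c = 0$, after which $\phi(z) = az$ is normal by Lemma~\ref{lem1}.

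For the $C_\phi$ case, specialize Lemma~\ref{lem2} to get $\sigma(z) = \bar a z - \bar c$, $g \equiv 1$, $h(z) = cz + 1$, $K_{\phi(0)} = 1$, and $\phi_2(z) = a^2 z/(c(a+1)z + 1)$, and apply the identity $JC_\phi^2 - 2 C_\phi^* J C_\phi + C_\phi^{*2} J = 0$ to $K_w$. Each of the three resulting rational expressions in $z$ has constant term $1$, so the $z^0$ equation is vacuous and one must look at the coefficient of $z$. The crucial step is that in Lemma~\ref{lem2}(b), $\overline{\phi(\overline{\sigma(w)})}$ has numerator $\bar a^2 w - \bar a \bar c = \bar a(\bar a w - \bar c)$, which cancels the denominator $\bar a w - \bar c$ of the prefactor $\bar a w/(\bar a w - \bar c)$. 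The three $z^1$ coefficients then simplify to $\bar a^2 w$, $\bar a^2 w/(\bar a \bar c w - \bar c^2 + 1)$, and $\bar a^2 w/(\bar c(\bar a + 1)w + 1)$. Dividing through by $\bar a^2 w$ and passing to the limit $w \to 0$ (valid because $|c| < 1$ keeps $1 - \bar c^2$ away from $0$) gives $2 = 2/(1 - \bar c^2)$, hence $c = 0$.

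For the $C_\phi^*$ statement I use the equivalent identity $C_\phi^{*2} J - 2 J C_\phi J C_\phi^* J + J C_\phi^2 = 0$ derived in the proof of Lemma~\ref{lem3}, with the three operators computed via parts (c), (d), (a) of Lemma~\ref{lem2}. The analogous $z^1$ coefficient simplifies to $\bar a^2 w/(\bar c(\bar a + 1) w + 1) - 2\bar a^2 w/(\bar c w + 1) + \bar a^2 w$; after dividing by $\bar a^2 w$ the identity is automatic at $w = 0$, so I extract $\partial_w$ at $w = 0$ to obtain $\bar c(1 - \bar a) = 0$. The residual branch $a = 1$ is closed off by substituting back and clearing denominators, which reduces the identity to $2\bar c^2 w^2 \equiv 0$ and again forces $c = 0$. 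The main obstacle is the algebraic bookkeeping in Lemma~\ref{lem2}(b) -- once the cancellation $\bar a^2 w - \bar a \bar c = \bar a(\bar a w - \bar c)$ is spotted, matching one Taylor coefficient in $z$ (together with one in $w$ for the $C_\phi^*$ case) finishes both halves of the theorem.
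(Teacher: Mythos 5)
Your argument is correct, and although it runs through the same framework as the paper---Lemma \ref{lem3} for the ``normal $\Rightarrow$ 2-complex symmetric'' direction and the kernel formulas of Lemma \ref{lem2} for the converse---the decisive step is genuinely different, and in fact it repairs a defect in the paper's own proof. The paper normalizes $\phi(z)=z/(sz+t)$, assembles the full rational identity in $(w,z)$, and then sets $w=0$; but at $w=0$ all three quantities $JC_\phi^2K_0$, $C_\phi^\ast JC_\phi K_0$ and ${C_\phi^\ast}^2JK_0$ reduce to the constant function $1$ (the second summand in Lemma \ref{lem2}(b) carries a factor of $w$ after the prefactor $\overline{h(1/\overline{\sigma(w)})}g(w)$ is simplified), so the specialization $w=0$ yields only the vacuous identity $1-2+1=0$; the contradiction displayed in the paper (the numerator term $\bar t^2[(\bar s-\overline{st})z+\bar t^2]$, which should read $\bar t^2[(\bar s+\overline{st})z+\bar t^2]$) rests on a sign slip. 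Your extraction of the coefficient of $z$ followed by the first-order behavior in $w$---equivalently, the $wz$-coefficient of the operator identity tested against $K_w$---is exactly the step that produces a nontrivial constraint: in your normalization $\phi(z)=az/(cz+1)$ it gives $2=2/(1-\bar c^2)$, hence $c=0$, and the cancellation $\bar a^2w-\bar a\bar c=\bar a(\bar aw-\bar c)$ you isolate is indeed the key simplification. Your treatment of the adjoint case is likewise complete: the paper again reduces to the same (vacuous) $w=0$ equation and says ``the other arguments are similar,'' whereas your two-stage argument ($\bar c(1-\bar a)=0$ from the $w$-derivative, then closing the residual branch $a=1$ by clearing denominators to get $2\bar c^2w^2\equiv 0$) is a correct and self-contained replacement. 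In short: same lemmas, same kernels, but your coefficient-matching is the version of the computation that actually works.
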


\begin{proof}
	Assume first that $C_\phi$  is normal. Lemma \ref{lem3} gives that $C_\phi$ ($C_\phi^\ast $) is 2-complex symmetric with  $J$.
	
	Conversely, suppose that $C_\phi$ is 2-complex symmetric with   $J$. Since $a\ne 0$ and $\phi(0)=0 $, set $\phi(z)={z\over sz+t}$, where $s={c\over a}$ and $t={d\over a}$ satisfy $ |t|\ge |s|+1$. If $ s=0$, then $\phi(z)={1\over t}z$. Therefore, Lemma \ref{lem1} gives that $C_\phi$ is normal. Now, we suppose that $s\ne 0$. Let $\sigma(z)={z-\bar{s}\over \bar{t}}$, $g(z)={1\over \bar{t}}$ and $h(z)=sz+t$. Note that $$\phi_2(z)={z\over (s+st)z+t^2}$$ and $$
	\phi(\overline{\sigma(z)})={ \bar{z}-s \over s\bar{z}-s^2+t^2 },\,\,\ h\left( { 1\over \overline{\sigma(z)}}\right)={ st\over \bar{z}-s}+t.$$
	For any $w,z\in \DD$ with $w\ne \bar{s}$, employing Lemma \ref{lem2}, we obtain that
	\begin{align}\label{equ7}
	 JC_\phi^2K_w(z)=K_{\bar{w}}(\overline{\phi_2(\bar{z})})={1\over 1-{zw  \over \bar{s}z+\overline{st}z+\bar{t}^2 }}={ (\bar{s}+\overline{st})z+\bar{t}^2 \over (\bar{s}+\overline{st})z+\bar{t}^2-wz },
	 \end{align}
	 \begin{equation}\label{equ8}
		\begin{aligned}
		C_\phi^\ast JC_\phi K_{w}(z)=&-\bar{s}{ g(w) \over \sigma(w) }K_{\phi(0)}(z)+\overline{h( {1  \over \overline{\sigma(w)} } )}g(w)K_{\phi( \overline{ \sigma(w) })}(z)\\
		=& {-\bar{s}\over w-\bar{s}}+\left( {\overline{st}\over {w}-\bar{s}}+\bar{t}  \right)\cdot{1\over \bar{t}}{ 1\over 1-{w-\bar{s}  \over \bar{s}w-\bar{s}^2+\bar{t}^2 }z}\\
		=&{\bar{s}\over  \bar{s}-w}+{ w\bar{t} \over (w-\bar{s} )\bar{t}}{ \bar{s}w-\bar{s}^2+\bar{t}^2 \over \bar{s}w-\bar{s}^2+\bar{t}^2- (w-\bar{s})z},
	\end{aligned}
\end{equation}
and 	\begin{align}\label{equ9}
 {C_\phi^\ast}^2JK_w(z)=K_{\phi_2(\bar{w})}(z)={1\over 1-{zw  \over \bar{s}w+\overline{st}w+\bar{t}^2 }}={ (\bar{s}+\overline{st})w+\bar{t}^2 \over (\bar{s}+\overline{st})w+\bar{t}^2-wz }.
  \end{align}
Since $C_\phi$ is 2-complex symmetric with   $J$, we obtain from (\ref{equ7}),  (\ref{equ8}) and (\ref{equ9}) that
  $$
  { (\bar{s}+\overline{st})z+\bar{t}^2 \over (\bar{s}+\overline{st}-w)z+\bar{t}^2 }+{ (\bar{s}+\overline{st})w+\bar{t}^2 \over (\bar{s}+\overline{st})w+\bar{t}^2-wz }
  $$
  $$={2\bar{s}\over  \bar{s}-w}+{ 2w\bar{t} \over (w-\bar{s} )\bar{t}}{ \bar{s}w-\bar{s}^2+\bar{t}^2 \over \bar{s}w-\bar{s}^2+\bar{t}^2- (w-\bar{s})z}$$
  for any $w,z\in\DD$ with $w\ne \bar{s}$, which gives that
 \begin{equation}\label{equ11}
 	\begin{aligned}
   &{ [(\bar{s}+\overline{st})z+\bar{t}^2 ] [(\bar{s}+\overline{st})w+\bar{t}^2-wz]+[(\bar{s}+\overline{st})w+\bar{t}^2] [ (\bar{s}+\overline{st}-w)z+\bar{t}^2] \over [(\bar{s}+\overline{st}-w)z+\bar{t}^2] [(\bar{s}+\overline{st})w+\bar{t}^2-wz]  }\\
   =&{ 2\overline{st}(w-\bar{s} )[\bar{s}w-\bar{s}^2+\bar{t}^2- (w-\bar{s})z] +2w\bar{t}(\bar{s}-w)(\bar{s}w-\bar{s}^2+\bar{t}^2)\over -(w-\bar{s})^2\bar{t} [\bar{s}w-\bar{s}^2+\bar{t}^2- (w-\bar{s})z]}
   \end{aligned}
\end{equation}
for any $w,z\in\DD$ with $w\ne \bar{s}$. Taking $w=0$ in (\ref{equ11}), then
$$
	{\bar{t}^2[(\bar{s}-\overline{st})z+\bar{t}^2] +\bar{t}^2[(\bar{s}+\overline{st})z+\bar{t}^2] \over  \bar{t}^2[(\bar{s}+\overline{st})z+\bar{t}^2]}={ 2\bar{s}^2\bar{t}(-\bar{s}^2+\bar{t}^2+\bar{s}z) \over \bar{s}^2\bar{t}(-\bar{s}^2+\bar{t}^2+\bar{s}z) }=2.
$$
 Therefore, $ (\bar{s}+\overline{st})z+\bar{t}^2 =(\bar{s}-\overline{st})z+\bar{t}^2$ for all $z\in \DD$. $s\ne0$ gives that $1+t=1-t$. Hence $t=0$,  a contradiction. So the hypothesis is not true, that is $s=0$.

Now, assume that $C_\phi^\ast$ is 2-complex symmetric with  $J$. We also assume that $s\ne 0$.   After a calculation, we have
\begin{equation}\label{equ31}
	\begin{aligned}
	JC_\phi JC_\phi^\ast J K_{w}(z)&=K_{\phi(\bar{w})}(\overline{\phi(\bar{z})})={1\over 1-{  w\over \bar{s}w+\bar{t}}\cdot {z  \over \bar{s}z+\bar{t} }}\\
& ={ (\bar{s}w+\bar{t}) (\bar{s}z+\bar{t})\over (\bar{s}w+\bar{t}) (\bar{s}z+\bar{t})-wz}
	\end{aligned}
\end{equation}
for any $w,z\in \DD$.
Since $C_\phi^\ast$ is 2-complex symmetric with  $J$, we obtain from (\ref{equ7}), (\ref{equ9}) and (\ref{equ31}) that
  \begin{equation}\label{equ10}
 	\begin{aligned}
 		&{ [(\bar{s}+\overline{st})z+\bar{t}^2 ] [(\bar{s}+\overline{st})w+\bar{t}^2-wz]+[(\bar{s}+\overline{st})w+\bar{t}^2] [ (\bar{s}+\overline{st}-w)z+\bar{t}^2] \over [(\bar{s}+\overline{st}-w)z+\bar{t}^2] [(\bar{s}+\overline{st})w+\bar{t}^2-wz]  }\\
 		=&{2 (\bar{s}w+\bar{t}) (\bar{s}z+\bar{t})\over (\bar{s}w+\bar{t}) (\bar{s}z+\bar{t})-wz}
 	\end{aligned}
 \end{equation}
 for any $w,z\in\DD$ with $w\ne \bar{s}$. Taking $w=0$ in (\ref{equ10}), we also have that
 $$
 {\bar{t}^2[(\bar{s}-\overline{st})z+\bar{t}^2] +\bar{t}^2[(\bar{s}+\overline{st})z+\bar{t}^2] \over  \bar{t}^2[(\bar{s}+\overline{st})z+\bar{t}^2]}=2.
 $$
 The other arguments are similar to the case of  $C_\phi$.  Then we obtain the desired result. The proof is complete.
\end{proof}

 We now consider 2-complex symmetric composition operators with   $J$ which are induced by linear fractional self-maps with $c=0$.

\begin{Lemma}\label{lem5}\cite{jkk}
	Let $\phi(z)={az+b\over cz+d }$ be a linear fractional of $\DD$ such that $c=0$. Then $C_\phi=C_\xi^\ast T_\tau^\ast$, where $\xi(z)={\bar{a}z  \over -\bar{b}z+\bar{d} }$ and $\tau(z)={ \bar{d} \over -\bar{b}z+\bar{d}}$.
\end{Lemma}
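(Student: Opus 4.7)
The plan is to derive the identity directly from Cowen's adjoint formula recalled at the start of Section~2, namely $C_\phi^\ast = T_g C_\sigma T_h^\ast$ with $\sigma(z) = (\bar a z - \bar c)/(-\bar b z + \bar d)$, $g(z) = 1/(-\bar b z + \bar d)$, and $h(z) = cz + d$. First I would specialize to $c = 0$: this instantly collapses $\sigma$ to $\xi(z) = \bar a z/(-\bar b z + \bar d)$, matching the symbol named in the lemma, while forcing $h(z) \equiv d$ to be constant, so that $T_h$ reduces to the scalar $d \cdot I$ and $T_h^\ast = \bar d \cdot I$.

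With these substitutions Cowen's formula becomes $C_\phi^\ast = \bar d\, T_g C_\xi$, and taking adjoints of both sides produces $C_\phi = d\, C_\xi^\ast T_g^\ast$. To finish, I would observe that the stated symbol $\tau(z) = \bar d/(-\bar b z + \bar d)$ is exactly $\bar d\, g(z)$, so $T_\tau = \bar d\, T_g$ and consequently $T_\tau^\ast = d\, T_g^\ast$. The scalar $d$ sitting on the right of $C_\xi^\ast T_g^\ast$ is therefore absorbed into the Toeplitz factor, yielding $C_\phi = C_\xi^\ast T_\tau^\ast$ as required.

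The whole derivation is really just book-keeping once Cowen's formula is available; there is no substantive obstacle. The only points that warrant care are the order in which the three factors appear after adjoining the product $T_g C_\xi$ (so that the composition operator ends up on the \emph{left} of the Toeplitz operator, which is what lets us identify the result with $C_\xi^\ast T_\tau^\ast$), and the verification $\tau = \bar d\, g$, which is immediate from the definitions.
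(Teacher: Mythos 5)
Your derivation is correct: specializing Cowen's formula $C_\phi^\ast = T_g C_\sigma T_h^\ast$ to $c=0$ makes $\sigma=\xi$ and $h\equiv d$, and taking adjoints (which correctly reverses the factors to put $C_\xi^\ast$ on the left) together with the observation $\tau=\bar d\,g$, hence $T_\tau^\ast = d\,T_g^\ast$, gives exactly $C_\phi = C_\xi^\ast T_\tau^\ast$. The paper itself gives no proof of this lemma (it is quoted from the cited reference), but your argument is the standard one-line derivation from Cowen's adjoint formula and is precisely how the result is obtained in the source, so there is nothing to add.
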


\begin{Theorem}\label{the3}
	Let $\phi(z)={az+b\over cz+d }$ be a linear fractional of $\DD$ such that $c=0$. Then $C_\phi$ is 2-complex symmetric with   $J$ if and only if $C_\phi$ is normal.
\end{Theorem}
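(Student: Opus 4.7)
The forward direction is immediate from Lemma \ref{lem3}. For the converse, I plan to exploit that the condition $c=0$ forces $\phi$ to be affine. Since $ad-bc=ad\ne 0$ we have $d\ne 0$, so we may write $\phi(z)=\alpha z+p$ with $\alpha=a/d$ and $p=b/d$, whence $\phi_2(z)=\alpha^2 z+p(\alpha+1)$. By Lemma \ref{lem1}, $C_\phi$ is normal iff $p=0$, so the task reduces to showing that $C_\phi$ being 2-complex symmetric with $J$ forces $p=0$.

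My approach is to apply the defining operator identity $JC_\phi^2-2\,C_\phi^\ast JC_\phi+(C_\phi^\ast)^2 J=0$ to the reproducing kernel $K_0\equiv 1$. Using $JK_u=K_{\bar u}$, $C_\phi^\ast K_u=K_{\phi(u)}$, and $C_\phi K_0=1$, each of the three terms collapses to a single explicit reproducing kernel:
\begin{align*}
JC_\phi^2 K_0 &= 1, &
C_\phi^\ast JC_\phi K_0 &= K_{\phi(0)}=K_p, &
(C_\phi^\ast)^2 JK_0 &= K_{\phi_2(0)}=K_{p(\alpha+1)}.
\end{align*}
The resulting scalar identity $1-2K_p(z)+K_{p(\alpha+1)}(z)=0$, once denominators are cleared, reduces to a polynomial in $z$ whose only two possibly nonzero coefficients are $\bar p(\bar\alpha-1)$ and $\bar p^{2}(\bar\alpha+1)$. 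Both must vanish for the identity to hold on $\DD$, and since $\bar\alpha$ cannot simultaneously equal $1$ and $-1$, we conclude $\bar p=0$, i.e., $p=0$. Hence $\phi(z)=\alpha z$, and Lemma \ref{lem1} yields the normality of $C_\phi$.

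I do not anticipate any substantive obstacle: the affineness of $\phi$ (forced by $c=0$) makes the $C_\phi^\ast$--actions on $K_0$ collapse at once, so a single evaluation at $w=0$ suffices, in contrast to Theorem \ref{the2} where $\phi(0)=0$ is exploited. Lemma \ref{lem5} is not strictly required for this route, though the factorization $C_\phi=C_\xi^\ast T_\tau^\ast$ it supplies (with $\xi(0)=0$) could underlie an alternative argument combining Theorem \ref{the2} with properties of the analytic Toeplitz factor $T_\tau$.
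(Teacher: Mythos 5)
Your proposal is correct, and its end-game is exactly the paper's: evaluate the identity $JC_\phi^2-2C_\phi^\ast JC_\phi+(C_\phi^\ast)^2J=0$ on a reproducing kernel, clear denominators, and kill the coefficients of $z$ and $z^2$ (the paper gets $n^2(m+1)=0$ and $n=mn$, which are your $\bar p^2(\bar\alpha+1)=0$ and $\bar p(\bar\alpha-1)=0$). The difference is in how you reach the three kernel identities. The paper first derives general-$w$ formulas for $JC_\phi^2K_w$, $C_\phi^\ast JC_\phi K_w$ and $(C_\phi^\ast)^2JK_w$ by invoking the factorization $C_\phi=C_\xi^\ast T_\tau^\ast$ of Lemma \ref{lem5}, and only then specializes to $w=0$. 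You observe that since $K_0\equiv 1$ is fixed by $J$ and satisfies $C_\phi K_0=K_0$, the three terms collapse to $1$, $K_{\phi(0)}$ and $K_{\phi_2(0)}$ using nothing beyond $C_\phi^\ast K_u=K_{\phi(u)}$ and $JK_u=K_{\bar u}$; this makes Lemma \ref{lem5} and the general-$w$ computations unnecessary, since the paper ultimately uses only the $w=0$ case anyway. Your route is therefore a genuine simplification of the same argument; what it gives up is only the stock of general-$w$ formulas (\ref{equ12})--(\ref{equ14}), which the paper reuses in the subsequent theorem on $C_\phi^\ast$. Your coefficient computation checks out: clearing denominators in $1-2K_{\bar p}^{\,\ast}$-form gives constant term $0$, $z$-coefficient $\bar p(\bar\alpha-1)$ and $z^2$-coefficient $\bar p^2(\bar\alpha+1)$, and these cannot vanish simultaneously unless $p=0$, whence Lemma \ref{lem1} applies.
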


\begin{proof}
	Assume first that $C_\phi$ is normal. Lemma \ref{lem3} gives that $C_\phi$ is 2-complex symmetric with   $J$.
	
	Conversely, suppose that $C_\phi$ is 2-complex symmetric with   $J$. Since $c=0$, then $\phi(z)=mz+n$, where $m={a\over d}$ and $n={b\over d}$ satisfy $|m|+|n|\le1$. Let $\xi(z)={ \bar{m}z \over -\bar{n}z+1 }$ and $\tau(z)={1\over -\bar{n}z+1}$. Note that $$\phi_2(z)=m^2z+mn+n$$ and$$
	\xi_2(z)={\bar{m}^2z \over -\overline{mn}z-\bar{n}z+1 },\,\,\,\phi(\overline{\xi(z)})={ m^2\bar{z}-n^2\bar{z}+n \over -n\bar{z}+1 },\,\,\,\tau(\xi(z))={ -\bar{n}z+1 \over -\overline{mn}z-\bar{n}z+1 }. $$ Lemmas \ref{lem2} and \ref{lem5} give that
	\begin{equation}\label{equ12}
		\begin{aligned}
		JC_\phi^2K_w(z)&=JC_\xi^\ast T_\tau^\ast C_\xi^\ast T_\tau^\ast K_w(z)=J\overline{\tau(w)}C_\xi^\ast T_\tau^\ast K_{\xi(w)}(z)\\
		&=J\overline{\tau(w)\tau(\xi(w))}K_{\xi_2(w)}(z)=\tau(w)\tau(\xi(w))K_{\overline{\xi_2(w)}}(z)\\
		&={1\over -\bar{n}w+1}{-\bar{n}w+1  \over -\overline{mn}w-\bar{n}w+1 }\cdot{ 1\over 1-{ \bar{m}^2w \over -\overline{mn}w-\bar{n}w+1 }z}\\
		&={ 1 \over -\overline{mn}w-\bar{n}w+1-\bar{m}^2wz },
		\end{aligned}
	\end{equation}
	\begin{equation}\label{equ13}
	\begin{aligned}
	C_\phi^\ast JC_\phi K_{w}(z)&=C_\phi^\ast JC_\xi^\ast T_\tau^\ast K_{w}(z)=	C_\phi^\ast J\overline{\tau(w)}K_{\xi(w)}(z)=	C_\phi^\ast\tau(w)\overline{K_{\xi(w)}(\bar{z})}\\&=C_\phi^\ast\tau(w)K_{\overline{\xi(w)}}(z)	=\tau(w)K_{\phi(\overline{\xi(w)})}(z)\\
	&={1\over -\bar{n}w+1}\cdot {1\over 1-{ \bar{m}^2w-\bar{n}^2w+\bar{n} \over -\bar{n}w+1 }z}\\
	&={  1\over -\bar{n}w+1-(\bar{m}^2w-\bar{n}^2w+\bar{n})z}
	\end{aligned}
\end{equation}
and
\begin{align}\label{equ14}
	{C_\phi^\ast}^2JK_w(z)=K_{\phi_2(\bar{w})}(z)={1\over 1-(\bar{m}^2w+\overline{mn}+\bar{n})z}
\end{align}
for any $w,z\in\DD$. Since $C_\phi$ is 2-complex symmetric with  $J$, then we obtain from (\ref{equ12}), (\ref{equ13}) and (\ref{equ14}) that
\begin{equation}\label{equ15}
	\begin{aligned}
& { 1 \over -\overline{mn}w-\bar{n}w+1-\bar{m}^2wz }+{1\over 1-(\bar{m}^2w+\overline{mn}+\bar{n})z}\\
&= {  2\over -\bar{n}w+1-(\bar{m}^2w-\bar{n}^2w+\bar{n})z}
\end{aligned}
\end{equation}
for any $w,z\in\DD$. Taking $w=0$ in (\ref{equ15}), we have that $$ 1+{1\over 1-(\overline{mn}+\bar{n})z}={2\over 1-\bar{n}z}$$
 for any $z\in\DD$. Then $${2-(\overline{mn}+\bar{n})z\over 1-(\overline{mn}+\bar{n})z }={2\over 1-\bar{n}z}.$$   Noting that the coefficient of $z^2$ must be $0$, then $n^2(m+1)=0$, which means that $n=0$ or $m=-1$. Similarly, noting that the coefficient of $z$ must be also $0$, then $n=mn$, which means that $n=0$ or $m=1$. Therefore,  $n=0$. This implies that $\phi(z)=mz$ with $|m|\le1$. Lemma \ref{lem1} gives that $C_\phi$ is normal.
\end{proof}

\begin{Theorem}
	Let $\phi(z)={az+b\over cz+d }$ be a linear fractional of $\DD$ such that $c=0$. Then $C_\phi^\ast$ is 2-complex symmetric with   $J$ if and only if $C_\phi$ is normal.
\end{Theorem}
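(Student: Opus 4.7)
The forward direction is immediate from Lemma \ref{lem3}, which actually asserts that normality of $C_\phi$ forces both $C_\phi$ and $C_\phi^\ast$ to be 2-complex symmetric with $J$. For the converse I would use the reformulation established inside the proof of Lemma \ref{lem3}: $C_\phi^\ast$ is 2-complex symmetric with $J$ if and only if
$$ {C_\phi^\ast}^2 J - 2\,J C_\phi J C_\phi^\ast J + J C_\phi^2 \;=\; 0,$$
and test this identity against the reproducing kernels $K_w$. By Lemma \ref{lem2}(a), (c), (d), the three individual summands simplify to
$$ {C_\phi^\ast}^2 J K_w(z) = K_{\phi_2(\bar w)}(z), \qquad J C_\phi J C_\phi^\ast J K_w(z) = K_{\phi(\bar w)}\bigl(\overline{\phi(\bar z)}\bigr), \qquad J C_\phi^2 K_w(z) = K_{\bar w}\bigl(\overline{\phi_2(\bar z)}\bigr).$$

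Since $c=0$, I would write $\phi(z)=mz+n$ with $|m|+|n|\le 1$, exactly as in Theorem \ref{the3}. The first and third kernels then match the closed-form expressions already computed as (\ref{equ14}) and (\ref{equ12}) in the proof of Theorem \ref{the3}, while the middle one follows by direct substitution:
$$ J C_\phi J C_\phi^\ast J K_w(z) \;=\; \frac{1}{1-(\bar m w+\bar n)(\bar m z+\bar n)}.$$
Specializing to $w=0$ collapses the operator identity to the single-variable equation
$$ \frac{1}{1-(\overline{mn}+\bar n)z} \;+\; 1 \;=\; \frac{2}{1-\bar n^{2}-\bar m\bar n\, z}.$$
Clearing denominators and reading off the constant term in $z$ forces $\bar n^{2}=0$, whence $n=0$. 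Then $\phi(z)=mz$ with $|m|\le 1$, and Lemma \ref{lem1} concludes that $C_\phi$ is normal.

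The argument is a close analog of—and in fact marginally simpler than—the proof of Theorem \ref{the3} for $C_\phi$. The only real obstacle is the computational bookkeeping involved in expanding the three rational kernels and keeping the roles of $J$, $C_\phi$, and $C_\phi^\ast$ straight; in contrast to the $C_\phi$ case, the constant term at $w=0$ alone is already enough to pin down $n=0$, so one does not need to inspect the $z$-coefficient as well.
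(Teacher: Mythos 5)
Your proposal is correct and follows essentially the same route as the paper: reformulate $2$-complex symmetry of $C_\phi^\ast$ as ${C_\phi^\ast}^2J-2JC_\phi JC_\phi^\ast J+JC_\phi^2=0$, evaluate the three terms on reproducing kernels for $\phi(z)=mz+n$, set $w=0$, and read off the constant term to force $n=0$, then invoke Lemma \ref{lem1}. The only cosmetic difference is that you obtain $JC_\phi^2K_w$ directly from Lemma \ref{lem2}(a) while the paper recycles the factorization $C_\phi=C_\xi^\ast T_\tau^\ast$ from Lemma \ref{lem5} and Theorem \ref{the3}; both yield the same formula, and your closing observation that the constant term alone suffices matches the paper's argument.
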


\begin{proof}
	Since $c=0$, then set $\phi(z)=mz+n$ where $m={a\over d}$ and $n={b\over d}$ satisfy $|m|+|n|\le 1$. Let $\xi(z)={\bar mz  \over -\bar{n}z+1 }$ and $ \tau(z)={1\over -\bar{n}z+1}$.
	Noting that $$\phi_2(z)=m^2z+mn+n,$$
  and $$
	\tau(\overline{\phi(\bar{z})})={1\over -\bar{n}(\bar{m}z+\bar{n})+1},\,\,\,\,\xi(\overline{\phi(\bar{z})})={ \bar{m}(\bar{m}z+\bar{n}) \over -\bar{n}(\bar{m}z+\bar{n})+1 }.$$
	Using Lemma \ref{lem5} and Theorem \ref{the3}, we obtain that
		\begin{equation}\label{equ27}
		\begin{aligned}
			JC_\phi^2K_w(z)=JC_\xi^\ast T_\tau^\ast C_\xi^\ast T_\tau^\ast K_w(z)
			&={ 1 \over -\overline{mn}w-\bar{n}w+1-\bar{m}^2wz  },
		\end{aligned}
	\end{equation}
	\begin{equation}\label{equ28}
		\begin{aligned}
			JC_\phi JC_\phi^\ast J K_{w}(z)&=JC_\xi^\ast T_\tau^\ast K_{\overline{\phi(\bar{w})}}(z)\\
			&=J\overline{\tau( \overline{\phi(\bar{w})})}K_{\xi(\overline{\phi(\bar{w})})}(z)=\tau( \overline{\phi(\bar{w})})K_{\overline{ \xi(\overline{\phi(\bar{w})}) }}(z)\\
			&={1\over -\bar{n}(\bar{m}w+\bar{n})+1}\cdot{1\over  1-{ \bar{m}(\bar{m}w+\bar{n})z \over -\bar{n}(\bar{m}w+\bar{n})+1}}
		\\&={1\over 1-(\bar{m}w+\bar{n})(\bar{m}z+\bar{n})}
		\end{aligned}
	\end{equation}
and
	\begin{align}\label{equ29}
		{C_\phi^\ast}^2JK_w(z)=K_{\phi_2(\bar{w})}(z)={1\over 1-(\bar{m}^2w+\overline{mn}+\bar{n})z}
	\end{align}
	for any $w,z\in \DD$. Since $C_\phi^\ast$ is 2-complex symmetric with  $J$, then we obtain from (\ref{equ27}), (\ref{equ28}) and (\ref{equ29}) that
	\begin{align}\label{equ30}
	{ 1 \over -\overline{mn}w-\bar{n}w+1-\bar{m}^2wz  }+{1\over 1-(\bar{m}^2w+\overline{mn}+\bar{n})z}= {2\over 1-(\bar{m}w+\bar{n})(\bar{m}z+\bar{n})}
\end{align}
	for any $w,z\in \DD$. Taking $w=0$, we get
  $${2-(\overline{mn}+\bar{n})z\over 1-(\overline{mn}+\bar{n})z }={2\over 1-\bar{n}(\bar{m}z+\bar{n})}.$$
  Noting that the coefficient of constant term must be $0$, then $\bar{n}^2=0$, which means that $n=0$. Thus, $\phi(z)=mz$ with $|m|\le 1$. Lemma \ref{lem1} deduces that $C_\phi$ is normal.
	
Conversely, assume that $C_\phi$ is normal. Lemma \ref{lem3} gives that $C_\phi^\ast$ is 2-complex symmetric with   $J$.
\end{proof}

Furthermore, we prove that there is no 2-complex symmetric composition operators with   $J$ which are induced by linear fractional self-maps with  $a=0$ and $\phi(0)\ne 0$.

\begin{Lemma}\label{lem6}
	Let $\phi:\DD\to\DD$ be a constant function.  Then $C_\phi$ $ (C_\phi^\ast)$ is 2-complex symmetric with  $J$ if and only if $\phi(z)\equiv 0$.
\end{Lemma}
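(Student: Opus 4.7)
Since $\phi$ is a constant self-map of $\DD$, write $\phi(z)\equiv \lambda$ with $\lambda\in\DD$, and note that $\phi_2\equiv\lambda$ as well. The sufficiency direction is immediate: if $\lambda=0$, then $\phi(z)=0\cdot z$, so Lemma \ref{lem1} gives that $C_\phi$ is normal, and Lemma \ref{lem3} shows that both $C_\phi$ and $C_\phi^\ast$ are 2-complex symmetric with $J$. The plan is therefore to prove the two necessity statements separately.

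For the necessity direction concerning $C_\phi$, I would invoke Lemma \ref{lem4} directly. Since $\phi\equiv\lambda$ forces $\phi(0)=\phi_2(0)=\lambda$, the relation
\begin{equation*}
2\phi_2(0)-4\phi(0)+\phi(0)\phi_2(0)=0
\end{equation*}
collapses to $\lambda^2-2\lambda=0$, so $\lambda\in\{0,2\}$. Because $\lambda\in\DD$, this yields $\lambda=0$.

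For the necessity direction concerning $C_\phi^\ast$, I would use the adjoint form of the 2-complex symmetric identity noted in the proof of Lemma \ref{lem3},
\begin{equation*}
{C_\phi^\ast}^2 J - 2JC_\phi J C_\phi^\ast J + JC_\phi^2 = 0,
\end{equation*}
and test it on $K_0=\mathbf{1}$. Using $C_\phi^\ast K_w=K_\lambda$ for every $w$ and $JK_w=K_{\bar w}$, I compute ${C_\phi^\ast}^2 J K_0 = K_\lambda$. Next, $C_\phi K_0$ is the constant $K_0(\lambda)=1$, so $C_\phi^2 K_0=\mathbf{1}$ and $JC_\phi^2 K_0=\mathbf{1}$. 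Finally, $C_\phi^\ast J K_0=K_\lambda$, then $J K_\lambda = K_{\bar\lambda}$, and $C_\phi K_{\bar\lambda}$ is the constant $K_{\bar\lambda}(\lambda)=1/(1-\lambda^2)$, so $JC_\phi J C_\phi^\ast J K_0$ is the constant $1/(1-\bar\lambda^2)$. Substituting,
\begin{equation*}
K_\lambda(z) + 1 = \frac{2}{1-\bar\lambda^2}
\end{equation*}
must hold for all $z\in\DD$, which forces $K_\lambda(z)=1/(1-\bar\lambda z)$ to be constant in $z$. This happens only when $\bar\lambda=0$, so $\lambda=0$.

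There is no serious obstacle here; the argument is a short reproducing-kernel computation. The only subtle point is that Lemma \ref{lem4} is tailored to $C_\phi$, so for the adjoint case we cannot simply quote it: we must redo the kernel evaluation using the equivalent identity for $C_\phi^\ast$ and then read off that the summand $K_\lambda$ must be constant.
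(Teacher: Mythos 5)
Your argument is correct, and both necessity directions hold up under checking: the computation $2\lambda-4\lambda+\lambda^2=0\Rightarrow\lambda\in\{0,2\}\Rightarrow\lambda=0$ is right, and the kernel evaluations ${C_\phi^\ast}^2JK_0=K_\lambda$, $JC_\phi^2K_0=\mathbf{1}$, $JC_\phi JC_\phi^\ast JK_0=1/(1-\bar\lambda^2)$ all agree with what the paper obtains. The difference from the paper is in the $C_\phi$ direction: the paper recomputes all three terms $JC_\phi^2K_w$, $C_\phi^\ast JC_\phi K_w$, ${C_\phi^\ast}^2JK_w$ for a constant symbol, obtains the two-variable identity
\begin{equation*}
\frac{1}{1-w\bar{c}}-\frac{2}{(1-w\bar{c})(1-\bar{c}z)}+\frac{1}{1-\bar{c}z}=0,
\end{equation*}
and extracts $c=0$ from it, whereas you simply quote Lemma \ref{lem4}, whose conclusion $2\phi_2(0)-4\phi(0)+\phi(0)\phi_2(0)=0$ collapses to $\lambda(\lambda-2)=0$ when $\phi\equiv\lambda$. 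Your route is shorter and reuses an already-proved scalar consequence (legitimately, since Lemma \ref{lem4} applies to arbitrary analytic self-maps and precedes this lemma); the paper's route is self-contained and yields the full functional identity, though it only needs a special case of it. For the $C_\phi^\ast$ direction your computation is essentially the paper's, specialized to $w=0$, and your observation that one cannot just cite Lemma \ref{lem4} for the adjoint case — because that lemma is stated only for $C_\phi$ — is exactly the right point to flag.
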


\begin{proof} The sufficiency is obvious.

	Now we assume   that $\phi(z)\equiv c$ for some $c\in \DD$ and $C_\phi$   is 2-complex symmetric with   $J$. Then we obtain that $$
	JC_\phi^2K_w(z)=JK_w(\phi_2(z))=J{1\over 1-\bar{w}c}={1\over 1-w\bar{c}},$$
	$$
	C_\phi^\ast JC_\phi K_{w}(z)=C_\phi^\ast J K_{w}(\phi(z))={1\over 1-w\bar{c}}	C_\phi^\ast K_0(z)={1\over (1-w\bar{c})(1-\bar{c}z)},	$$ $$
	{C_\phi^\ast}^2JK_w(z)=K_{\phi_2(\bar{w})}(z)={1\over 1-\bar{c}z} $$and$$
	JC_\phi JC_\phi^\ast J K_{w}(z)=K_{\phi(\bar{w})}(\overline{\phi(\bar{z})})={1\over 1-\bar{c}^2}  $$
	for any $w,z\in\DD$. Since $C_\phi$ is 2-complex symmetric with   $J$, we get
	\begin{align}\label{equ32}
{1\over 1-w\bar{c}}-{2\over (1-w\bar{c})(1-\bar{c}z)}+{1\over 1-\bar{c}z} =0
	\end{align}
for any $w,z\in\DD$.   By a simple calculation, we see that $c=0$.

	Assume that $C_\phi^\ast$ is 2-complex symmetric with   $J$. Similarly, we have
	\begin{align}\label{equ16}
		{1\over 1-w\bar{c}}-{2\over 1-\bar{c}^2}+{1\over 1-\bar{c}z} =0
	\end{align}
	for any $w,z\in\DD$.  Therefore, $c=0$. The proof is complete.
	\end{proof}

\begin{Theorem}
	Let $\phi(z)={az+b\over cz+d }$ be a linear fractional of $\DD$ such that $a=0$ and $\phi(0)\ne 0$. Then $C_\phi$ is not 2-complex symmetric with   $J$.
\end{Theorem}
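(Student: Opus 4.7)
The plan is to apply the $2$-complex symmetric operator identity $JC_\phi^2-2C_\phi^*JC_\phi+{C_\phi^*}^2J=0$ to the reproducing kernel $K_w$, evaluate at $z=0$, and show that the resulting rational identity in $w$ cannot hold under the standing hypotheses. Since $a=0$ and $\phi(0)=b/d\neq 0$, both $b$ and $d$ must be nonzero. If in addition $c=0$, then $\phi\equiv b/d$ is a nonzero constant and Lemma~\ref{lem6} forces $\phi\equiv 0$, a contradiction; so we may assume $c\neq 0$. Cowen's data then become $\sigma(w)=-\bar c/(\bar d-\bar b w)$, $g(w)=1/(\bar d-\bar b w)$, $h(z)=cz+d$, and the restriction $\bar a w\neq\bar c$ of Lemma~\ref{lem2} is automatic for every $w\in\DD$.

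Parts (a) and (c) of Lemma~\ref{lem2}, together with $\phi_2(0)=bd/(bc+d^2)$, give
$$JC_\phi^2K_w(0)=\frac{\bar b\bar c+\bar d^2}{\bar b\bar c+\bar d^2-\bar b\bar d w},\qquad {C_\phi^*}^2JK_w(0)=1.$$
For part (b), the $a=0$ simplifications $-\bar c\,g(w)/\sigma(w)=1$ and $\overline{h(1/\overline{\sigma(w)})}=\bar b w$, combined with $K_{\phi(0)}(0)=K_{\phi(\overline{\sigma(w)})}(0)=1$, yield
$$C_\phi^*JC_\phi K_w(0)=1+\frac{\bar b w}{\bar d-\bar b w}=\frac{\bar d}{\bar d-\bar b w}.$$

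Substituting these three values into the identity at $z=0$ and clearing the common denominator $(\bar b\bar c+\bar d^2-\bar b\bar d w)(\bar d-\bar b w)$, the $w$-independent contributions cancel and, after extracting the factor $\bar b$, one is left with
$$\bar b\,w\bigl[\bar b\bar d w-(2\bar b\bar c+\bar d^2)\bigr]=0\qquad\text{for all }w\in\DD.$$
Matching coefficients of $w$ and $w^2$ in this polynomial identity (and using $\bar b\neq 0$) forces $\bar b\bar d=0$ and $2\bar b\bar c+\bar d^2=0$ simultaneously; the first equality contradicts $b,d\neq 0$, completing the proof. The technical point that makes the argument work is the decision to keep $w$ free while specializing $z=0$: the scalar relation extracted from Lemma~\ref{lem4} (which fixes both variables) gives only one constraint and does not by itself rule out every linear fractional $\phi$ with $a=0$, whereas the present test produces a nontrivial rational function of $w$ whose two nonzero coefficients supply independent incompatible constraints on $b,c,d$.
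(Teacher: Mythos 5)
Your proposal is correct and follows essentially the same route as the paper: both test the identity $JC_\phi^2-2C_\phi^*JC_\phi+{C_\phi^*}^2J=0$ on reproducing kernels via Lemma~\ref{lem2}, dispose of the constant case with Lemma~\ref{lem6}, specialize one of the two variables to $0$, and derive the contradiction $d=0$ from the vanishing of the quadratic coefficient. The only difference is that the paper sets $w=0$ and varies $z$ while you set $z=0$ and vary $w$; since $\bigl(JC_\phi^2-2C_\phi^*JC_\phi+{C_\phi^*}^2J\bigr)K_w(z)$ is symmetric in $w$ and $z$, this yields the identical polynomial identity.
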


\begin{proof}
	Since $a=0$ and $\phi(0)\ne 0$, we set $\phi(z)={1\over mz+n}$,  where $m={c\over b}$ and $n={d\over b}$. When $m=0$, $\phi(z)={1\over n}\ne0$, then Lemma \ref{lem6} gives that $C_\phi$ is not 2-complex symmetric with  $J$. Now, we assume that $m\ne 0$, $n\ne 0$
	and $C_\phi$ is 2-complex symmetric with  $J$. Let $\sigma(z)={ -\bar{m} \over -z+\bar{n} }$, $g(z)={1\over -z+\bar{n}}$ and $h(z)=mz+n$. Note that $$
	\phi_2(z)={ mz+n\over m+mnz+n^2},\,\, h\left({1\over \overline{\sigma(z)}}\right)=\bar{z},\,\, \phi(\overline{\sigma(z)})={-\bar{z}+n\over -m^2-n\bar{z}+n^2}.$$
	Lemma \ref{lem2} gives that
	\begin{align}\label{equ17}
		JC_\phi^2K_w(z)=K_{\bar{w}}(\overline{\phi_2(\bar{z})})={1\over 1-w{\bar{m}z+\bar{n}  \over \bar{m}+\overline{mn}z+\bar{n}^2 }}={\bar{m}+\overline{mn}z+\bar{n}^2 \over \bar{m}+\overline{mn}z+\bar{n}^2 -(\bar{m}z+\bar{n})w },
	\end{align}
	\begin{equation}\label{equ18}
		\begin{aligned}
			C_\phi^\ast JC_\phi K_{w}(z)=&-\bar{m}{ g(w) \over \sigma(w) }K_{\phi(0)}(z)+\overline{h( {1  \over \overline{\sigma(w)} } )}g(w)K_{\phi( \overline{ \sigma(w) })}(z)\\
			=& K_{\phi(0)}(z)+{ w \over -w+ \bar{n}}K_{\phi( \overline{ \sigma(w) })}(z)\\
		=&{1\over 1-{z\over\bar{n} }}+	{w \over -w+ \bar{n}}\cdot {1\over 1-{ -w+\bar{n} \over -\bar{m}^2-\bar{n}w+\bar{n}^2 }z}\\
		=&{\bar{n}\over \bar{n}-z}	+{ w \over -w+ \bar{n}}\cdot{  -\bar{m}^2-\bar{n}w+\bar{n}^2\over -\bar{m}^2-\bar{n}w+\bar{n}^2-(-w+ \bar{n})z }
		\end{aligned}
	\end{equation}
	and 	\begin{align}\label{equ19}
		{C_\phi^\ast}^2JK_w(z)=K_{\phi_2(\bar{w})}(z)={1\over 1-z{\bar{m}w+n  \over \bar{m}+\overline{mn}w+\bar{n}^2 }}={\bar{m}+\overline{mn}w+\bar{n}^2 \over \bar{m}+\overline{mn}w+\bar{n}^2 -(\bar{m}w+\bar{n})z}
	\end{align}
	for any $w, z\in \DD$. Since $C_\phi$ is 2-complex symmetric with  $J$,   we obtain from (\ref{equ17}), (\ref{equ18}) and (\ref{equ19}) that
	\begin{equation}\label{equ20}
		\begin{aligned}
	&	{\bar{m}+\overline{mn}z+\bar{n}^2 \over \bar{m}+\overline{mn}z+\bar{n}^2 -(\bar{m}z+\bar{n})w }+{\bar{m}+\overline{mn}w+\bar{n}^2 \over \bar{m}+\overline{mn}w+\bar{n}^2 -(\bar{m}w+n)z}	\\
	=&2\left({\bar{n}\over \bar{n}-z}	+{w \over -w+ \bar{n}}\cdot{  -\bar{m}^2-\bar{n}w+\bar{n}^2\over -\bar{m}^2-\bar{n}w+\bar{n}^2-(-w+ \bar{n})z }\right)
		\end{aligned}
	\end{equation}
	for any $w,z\in\DD$. Taking $w=0$ in (\ref{equ20}), we have that
	\begin{equation}\label{equ21}
	\begin{aligned}	
		{ (-\bar{n}z+\bar{m}+\bar{n}^2)+(\bar{m}+\bar{n}^2) \over -\bar{n}z+\bar{m}+\bar{n}^2 }={2\bar{n}\  \over\bar{n}-kz }
			\end{aligned}
	\end{equation}
	for any $z\in\DD$. Thus the coefficient of $z^2$ must be $0$. This implies that $n=0$, which is a contradiction. The proof is complete.
\end{proof}

In the remainder of this paper, we consider 2-complex symmetric composition operators with   $J$ which are induced by linear fractional self-maps with $a\ne 0$, $c\ne 0$ and $\phi(0)\ne 0$.

\begin{Theorem}
	 Let $\phi(z)={az+b\over cz+d }$ be a linear fractional of $\DD$ such that $a\ne 0$, $c\ne 0$ and $\phi(0)\ne 0$. Then $C_\phi$ is not 2-complex symmetric with $J$.
\end{Theorem}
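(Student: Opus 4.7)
The strategy is to apply the 2-complex symmetric identity $JC_\phi^2-2C_\phi^\ast JC_\phi+{C_\phi^\ast}^2 J=0$ to the reproducing kernel $K_0$ and derive a contradiction by comparing coefficients of $z$. Assume for contradiction that $C_\phi$ is 2-complex symmetric with $J$. The hypothesis $\phi(0)=b/d\ne 0$ forces both $b\ne 0$ and $d\ne 0$, and combined with $a,c\ne 0$ this means all four coefficients of $\phi$ are nonzero. In particular $\bar a\cdot 0=0\ne\bar c$, so Lemma \ref{lem2} applies at $w=0$.

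Let $\sigma(z)=(\bar a z-\bar c)/(-\bar b z+\bar d)$, $g(z)=1/(-\bar b z+\bar d)$, and $h(z)=cz+d$ as in Cowen's formula. A direct evaluation gives $\sigma(0)=-\bar c/\bar d$, $g(0)=1/\bar d$, and crucially
\[
h\!\left(\tfrac{1}{\overline{\sigma(0)}}\right)=h(-d/c)=-d+d=0.
\]
Hence the second summand in Lemma \ref{lem2}(b) vanishes at $w=0$, while the first term simplifies, via $-\bar c\,g(0)/\sigma(0)=1$, to $K_{\phi(0)}(z)$. Combining with Lemma \ref{lem2}(a) and (c) one obtains
\[
JC_\phi^2 K_0(z)=1,\qquad C_\phi^\ast JC_\phi K_0(z)=K_{\phi(0)}(z),\qquad {C_\phi^\ast}^2 JK_0(z)=K_{\phi_2(0)}(z).
\]
(Equivalently, since $K_0\equiv 1$, one can argue directly that $C_\phi K_0=1$, $J1=1$, and $C_\phi^\ast 1=C_\phi^\ast K_0=K_{\phi(0)}$, obtaining the same three kernels without invoking Lemma \ref{lem2}(b) explicitly.)

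Substituting into the 2-complex symmetric identity and evaluating at any $z\in\DD$ yields
\[
1-\frac{2}{1-\overline{\phi(0)}z}+\frac{1}{1-\overline{\phi_2(0)}z}=0,
\]
and clearing denominators produces the polynomial identity
\[
\bigl(\overline{\phi_2(0)}-2\overline{\phi(0)}\bigr)z+\overline{\phi(0)\phi_2(0)}\,z^2=0\qquad(z\in\DD).
\]
Both coefficients must vanish, so $\phi_2(0)=2\phi(0)$ and $\phi(0)\phi_2(0)=0$. Since $\phi(0)\ne 0$, the second relation forces $\phi_2(0)=0$, and then the first forces $\phi(0)=0$, which is the desired contradiction. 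The only delicate step is recognizing the cancellation $h(-d/c)=0$ that collapses Lemma \ref{lem2}(b) at $w=0$; once the three kernel expressions above are in hand, the coefficient comparison is routine and does not require the full $w$-dependence of Lemma \ref{lem2}.
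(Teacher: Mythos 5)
Your proof is correct, and it reaches the contradiction by a shorter and strictly more general route than the paper. Both arguments begin by applying the identity $JC_\phi^2-2C_\phi^\ast JC_\phi+{C_\phi^\ast}^2J=0$ to $K_0$, and your three kernel evaluations $JC_\phi^2K_0=1$, $C_\phi^\ast JC_\phi K_0=K_{\phi(0)}$, ${C_\phi^\ast}^2JK_0=K_{\phi_2(0)}$ are exactly what the paper's equations (\ref{equ22})--(\ref{equ24}) reduce to at $w=0$ (and, as you note, they follow trivially from $K_0\equiv 1$ without Lemma \ref{lem2} at all). The difference is in the endgame: the paper extracts only the $z^2$-coefficient of the resulting rational identity to get $m=-1$, then separately invokes Lemma \ref{lem4} (which is just the same identity evaluated at $z=1/2$) to get $ns=-1$, and finally contradicts the nondegeneracy $ad-bc\ne 0$. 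You instead compare all Taylor coefficients of
\[
1-\frac{2}{1-\overline{\phi(0)}z}+\frac{1}{1-\overline{\phi_2(0)}z}=0,
\]
obtaining $\overline{\phi_2(0)}=2\overline{\phi(0)}$ and $\overline{\phi(0)\phi_2(0)}=0$, hence $\phi(0)=0$ outright. This never uses $a\ne 0$, $c\ne 0$, or even that $\phi$ is linear fractional, so your argument in fact proves the sharper statement that \emph{any} analytic self-map $\phi$ for which $C_\phi$ is 2-complex symmetric with $J$ must satisfy $\phi(0)=0$; this strengthens Lemma \ref{lem4} and simultaneously disposes of the paper's theorem for the case $a=0$, $\phi(0)\ne 0$. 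The trade-off is that the paper's general-$w$ computations (\ref{equ22})--(\ref{equ24}) carry more information that could in principle be reused elsewhere, whereas your argument deliberately discards everything except the $w=0$ slice; for the theorem as stated, that slice suffices.
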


\begin{proof} 	We prove it by contradiction.  Assume that $C_\phi$ is  2-complex symmetric with   $J$.
	Since $a\ne 0$, $c\ne 0$ and $\phi(0)\ne 0$, set $\phi(z)={mz+n\over sz+1 }$, where $m={a\over d}$, $n={b\over d}$ and $s={c\over d}$. Let $\sigma(z)={\bar{m}z-\bar{s}\over -\bar{n}z+1}$, $g(z)={1\over-\bar{n}z+1 }$ and $ h(z)=sz+1$. Note that $$
	\phi_2(z)={(m^2+ns)z+mn+n  \over (ms+s)z+ns+1 },\,~~~~~~~\,h\left({1\over \overline{\sigma(z)}}\right)={ (m-ns)\bar{z}\over m\bar{z}-s }$$and$$
	\phi(\overline{\sigma(z)})={ (m^2-n^2)\bar{z}+n-ms \over (ms-n)\bar{z}+1-s^2 }.$$
	Lemma \ref{lem2} gives that
\begin{equation}\label{equ22}
	\begin{aligned}
		JC_\phi^2K_w(z)&=K_{\bar{w}}(\overline{\phi_2(\bar{z})})={1\over 1-w{(\bar{m}^2+\overline{ns})z+\overline{mn}+\bar{n}  \over (\overline{ms}+\bar{s})z+\overline{ns}+1 }}\\&={(\overline{ms}+\bar{s})z+\overline{ns}+1  \over (\overline{ms}+\bar{s})z+\overline{ns}+1-[(\bar{m}^2+\overline{ns})z+\overline{mn}+\bar{n} ]w},
	\end{aligned}
\end{equation}
	\begin{equation}\label{equ23}
		\begin{aligned}
			C_\phi^\ast JC_\phi K_{w}(z)=&-\bar{s}{ g(w) \over \sigma(w) }K_{\phi(0)}(z)+\overline{h( {1  \over \overline{\sigma(w)} } )}g(w)K_{\phi( \overline{ \sigma(w) })}(z)\\
			=& {-\bar{s}\over \bar{m}w-\bar{s}}{1\over 1-\bar{n}z}+{ (\bar{m}-\overline{ns})w \over \bar{m}w-\bar{s} }{1\over -\bar{n}w+1}\cdot{1\over 1-{(\bar{m}^2-\bar{n}^2)w-\overline{ms}+\bar{n}  \over (\overline{ms}-\bar{n})w+1-\bar{s}^2 }z}\\
		=& { -\bar{s} \over (\bar{m}w-\bar{s})(1-\bar{n}z) }+{ (\bar{m}-\overline{ns})w \over (\bar{m}w-\bar{s})(-\bar{n}w+1) }\\
		&\,\,\,\,\,\,\,\,\,\,\,\,\,\,\,\,\,\,\,\,\,\,\,\,\,\,\,\,\,\,\,\,\,\,\,\,\,\cdot{ (\overline{ms}-\bar{n})w+1-\bar{s}^2 \over (\overline{ms}-\bar{n})w+1-\bar{s}^2-[(\bar{m}^2-\bar{n}^2)w-\overline{ms}+\bar{n}]z}	
		\end{aligned}
	\end{equation}
	and 	
	\begin{equation}\label{equ24}
		\begin{aligned}
		{C_\phi^\ast}^2JK_w(z)&=K_{\phi_2(\bar{w})}(z)={1\over 1-z{(\bar{m}^2+\overline{ns})w+\overline{mn}+\bar{n}  \over (\overline{ms}+\bar{s})w+\overline{ns}+1 }}\\&={(\overline{ms}+\bar{s})w+\overline{ns}+1  \over (\overline{ms}+\bar{s})w+\overline{ns}+1-[(\bar{m}^2+\overline{ns})w+\overline{mn}+\bar{n} ]z}
	\end{aligned}
\end{equation}
	for any $w,z\in \DD$. Taking $w=0$, then we obtain from (\ref{equ22}), (\ref{equ23}) and (\ref{equ24}) that$$
	JC_\phi^2K_0(z)=1,\,\,\,\,  C_\phi^\ast JC_\phi K_{0}(z)={1\over 1-\bar{n}z}	$$ and $$
	{C_\phi^\ast}^2JK_0(z)={\overline{ns}+1  \over \overline{ns}+1-(\overline{mn}+\bar{n})z }$$for any $z\in \DD$.
Since $C_\phi$ is 2-complex symmetric with  $J$, we obtain that
	 \begin{align}
	 	1+{\overline{ns}+1  \over \overline{ns}+1-(\overline{mn}+\bar{n})z }={2\over 1-\bar{n}z}
	 	\end{align}
 	for any $z\in \DD$, which implies that
 	\begin{align}\label{equ25}
 	 { 2\overline{ns}+2-(\overline{mn}+\bar{n})z \over \overline{ns}+1 -(\overline{mn}+\bar{n})z}={ 2\over 1-\bar{n}z}\end{align}
 	 for any $z\in \DD$. Noting that the coefficients of $z^2$ must be $0$, then we have that $\bar{n}^2(m+1)=0$. Since $\phi(0)\ne 0$, then  $m=-1$.

  Noting that $
	\phi_2(z)={(m^2+ns)z+mn+n  \over (ms+s)z+ns+1 }$, then Lemma \ref{lem4} gives that $$
	2\phi_2(0)-4\phi(0)+\phi(0)\phi_2(0)={ 2(mn+n) \over ns+1 }-4n+{ n(mn+n) \over ns+1 }=0.$$
	By a simple calculation, we see that $ 2m-4ns+mn+n=2$.  Since  $m=-1$, we get that $ ns=-1$. Therefore, $m-ns=0$, which means that $\phi$ is not a linear fractional of $\DD$, a contradiction.  The proof is complete.
\end{proof}



\end{document}